\documentclass[11pt]{amsart}

\usepackage[margin=1.1in]{geometry}
\usepackage{amsmath,amssymb,amsthm,mathtools}
\usepackage{microtype}
\usepackage[hidelinks]{hyperref}

\newtheorem{theorem}{Theorem}
\newtheorem{lemma}[theorem]{Lemma}

\theoremstyle{remark}

\newtheorem{question}[theorem]{\rm\bf Question}

\DeclareMathOperator{\Area}{Area}
\DeclareMathOperator{\sys}{sys}
\DeclareMathOperator{\arcosh}{arcosh}

\title{Every Closed Oriented Surface of Positive Genus Is Loewner}
\author{Qiongling Li}
\address{Qiongling Li: Chern Institute of Mathematics and LPMC, Nankai University, Tianjin 300071, China} \email{qiongling.li@nankai.edu.cn}
\date{September 2, 2026}

\begin{document}
\maketitle

\begin{abstract}
Using the universal-cover ball comparison theorem of Chodosh--Vikman, we have an easy consequence that every closed oriented Riemannian surface of positive genus is Loewner, that is, it satisfies a systolic-area inequality. 

\end{abstract}
\subsection*{AI usage statement} The key observation in this note was suggested by ChatGPT
(GPT-5.6 Pro), which identified the recent preprint of
Chodosh--Vikman \cite{CV} and observed that with elementary calculus, it yields the main
result. ChatGPT also produced a substantial portion of the initial
draft. The author has checked and revised every argument in the
final version and accepts full responsibility for the content and
for any remaining errors.
\subsection*{Purpose of the note} This note is only for sharing and giving a closure to the question, which the author has been followed up for almost three years. The author does not intend to submit this paper for publication in any journal. 
\section{Introduction}
Throughout, all surfaces are connected, closed, and oriented.  Let $\Sigma$ be a Riemannian surface of genus $k\geq 1$. Denote the \emph{systole} of $\Sigma$ by $\sys(\Sigma)$. By definition, $\sys(\Sigma)$ is the length of shortest non-contractible closed geodesics. Denote the area of $\Sigma$ by 
$\Area(\Sigma)$. We say that the surface $\Sigma$ is \emph{Loewner} if the systolic ratio satisfies the inequality
\begin{equation}\label{equ:Loewner}
    \frac{\sys^2(\Sigma)}{\Area(\Sigma)} \leq \frac{2}{\sqrt{3}}.
\end{equation}

Around 1949, 
Loewner (ref. \cite{Pu}) first proved that every surface of genus $1$ satisfies \eqref{equ:Loewner}. 

For details on history of Loewner's inequality and related developments in  the systolic geometry,
we refer to the book of Katz \cite{katz2007}. It is natural to ask if every closed surface is Loewner. 

\begin{question}
Is every closed oriented Riemannian surface of positive genus
Loewner?
\end{question}

Gromov \cite{gromov1983} showed that, for any surface of genus $k$, 
\begin{equation*}\label{equ:Loewner}
    \frac{\sys^2(\Sigma)}{\Area(\Sigma)}< \frac{64}{4\sqrt{k}+27}.
\end{equation*}
Thus all surfaces of genus $k\geq 51$ is Loewner.  For surfaces of genus $2$ and genus $k\geq 20$, the answer is affirmative, shown by Katz and Sabourau \cite{ks2005,ks2006}. The author and Su \cite{LS} extends the result to $k\geq 18$.

We show the following theorem. 

\begin{theorem}\label{cor:all}
Every closed oriented Riemannian surface of genus at least one is Loewner.
\end{theorem}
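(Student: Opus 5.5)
The plan is to derive Theorem~\ref{cor:all} from the universal-cover ball comparison of Chodosh--Vikman \cite{CV}, plus an elementary one-variable optimization. Since Loewner \cite{Pu} settled genus $1$, and Gromov, Katz--Sabourau and Li--Su \cite{gromov1983,ks2005,ks2006,LS} cover genus $2$ and genus $\geq 18$, the argument should ideally not depend on $k$. I will nonetheless aim for a uniform proof valid for all $k\geq 1$. Write $s=\sys(\Sigma)$, let $\pi\colon\tilde\Sigma\to\Sigma$ be the universal cover, and fix a point $\tilde x\in\tilde\Sigma$ lying over a point $x$ that lies on a systolic loop.

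\textbf{Step 1 (input from \cite{CV}).} I will use the Chodosh--Vikman theorem in the following form: for a closed aspherical surface, every metric ball in $\tilde\Sigma$ satisfies a lower area bound $\Area(\tilde B(\tilde x,R))\geq F(R)$ for all $R>0$. Here $F$ is an explicit model profile, the area of a ball in a model plane, with equality only for the model. The point is that this holds for \emph{all} radii, not just $R\leq s/2$; this is what beats the classical Gromov--Hebda bound $2r^2$, which only gives $\Area\geq s^2/2$.

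\textbf{Step 2 (converting ball area in $\tilde\Sigma$ into area of $\Sigma$).} For a radius $R$ slightly less than $s/2$, the covering map $\pi$ restricted to $\tilde B(\tilde x,R)$ is injective. This is because two distinct lifts of a point that are joined by a path of length $<s$ would produce a non-contractible loop of length $<s$. Hence $\Area(\Sigma)\geq \Area(\tilde B(\tilde x,R))\geq F(R)$. I will push beyond $s/2$ as follows. For $R\in[s/2,s)$, the overlap of $\pi(\tilde B(\tilde x,R))$ with itself comes only from deck translates $g$ with $d(\tilde x,g\tilde x)<2R$. I will bound the overlapped area by the area of lunes $\tilde B(\tilde x,R)\cap \tilde B(g\tilde x,R)$, using $d(\tilde x,g\tilde x)\geq s$ and, again, the comparison of \cite{CV} to control these lunes. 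This gives an inequality of the form
\[
\Area(\Sigma)\geq \max_{R}\, G(R,s),
\]
where $G$ is explicit.

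\textbf{Step 3 (calculus).} Maximize $G(R,s)$ over $R$; by scaling, this reduces to a one-variable problem. The goal is to show the maximum is at least $\tfrac{\sqrt3}{2}s^2$, which is exactly \eqref{equ:Loewner}. This is the "elementary calculus" step. For the hexagonal flat torus, the model configuration of six nearest translates at distance $s$ should be extremal, so I expect the optimum near $R=s/\sqrt3$, the circumradius of the hexagonal Dirichlet domain.

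The main obstacle is Step 2: controlling the overlaps once $R>s/2$, where $\pi$ stops being injective. The difficulty is bounding how many deck translates can come within distance $2R$ of $\tilde x$, and how much each lune can contribute. My first attempt will use a packing argument: balls of radius $s/2$ around the orbit points $g\tilde x$ are pairwise disjoint, and a lower area bound from \cite{CV} for each of these small balls then limits their number inside $\tilde B(\tilde x,R+s/2)$. If that estimate is too lossy to reach $2/\sqrt3$, I will instead restrict to a Dirichlet fundamental domain about $\tilde x$ and apply the comparison directly to its inscribed and circumscribed balls.
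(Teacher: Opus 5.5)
There is a genuine gap, and it lies in the part you identify as the main work.

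First, Step~1 overstates the input. Theorem~\ref{thm:CV} applies only for genus $k\ge 2$, after normalizing $\Area(\Sigma)=4\pi(k-1)$. It only asserts that \emph{some} center $\widetilde x$ has $\Area(B(\widetilde x,r))\ge 2\pi(\cosh r-1)$. It says nothing about every ball, nothing about your chosen point over a systolic loop, and nothing for the torus. For $k=1$, flat tori rule out any universal profile better than $\pi R^2$. Injectivity on balls of radius $<s/2$ then yields at best $\sys^2(\Sigma)/\Area(\Sigma)\le 4/\pi>2/\sqrt3$. So genus one has to be taken from Loewner's theorem itself, as the paper does.

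Second, the extension of Step~2 beyond $s/2$ cannot be carried out with this input. Bounding $\Area(\Sigma)$ below by $\Area(\widetilde B(\widetilde x,R))$ minus overlaps requires \emph{upper} bounds on the lunes $\widetilde B(\widetilde x,R)\cap\widetilde B(g\widetilde x,R)$ and on the number of translates within distance $2R$. The comparison supplies only \emph{lower} bounds on ball areas, and only at an unspecified center. Your packing variant likewise needs an upper bound on $\Area(\widetilde B(\widetilde x,R+s/2))$, which is not available. So no function $G$ is ever produced, and the inequality $\max_R G(R,s)\ge\frac{\sqrt3}{2}s^2$ in Step~3 is just the conclusion restated.

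The missing observation is that for $k\ge2$ none of this is needed. What \cite{CV} buys is not validity for radii beyond $s/2$, but a \emph{hyperbolic} profile measured against the fixed area $4\pi(k-1)$. Apply the first half of your Step~2 at the Chodosh--Vikman center; this is legitimate because every ball of radius $<s/2$ in the universal cover projects injectively. With $s$ the normalized systole, it gives $4\pi(k-1)\ge 2\pi(\cosh r-1)$ for all $r<s/2$. Hence $\cosh(s/2)\le 2k-1$ and
\[
  \frac{\sys^2(\Sigma)}{\Area(\Sigma)}=\frac{s^2}{4\pi(k-1)}\le\frac{\arcosh^2(2k-1)}{\pi(k-1)}.
\]
With $t=\arcosh(2k-1)$ the right side becomes $2t^2/(\pi(\cosh t-1))$. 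This decreases in $t$, because $2(\cosh t-1)\le t\sinh t$ is equivalent to $2\tanh(t/2)\le t$. At $k=2$ it equals $\arcosh^2 3/\pi\approx 0.989<2/\sqrt3$.

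Your impression that one must go past $s/2$ comes from thinking of a Euclidean profile. The hexagonal-torus extremal picture in Step~3 is also beside the point here: for genus $\ge2$ the bound has room to spare, and no overlap or extremal analysis is required.
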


\section{Proof of Theorem \ref{cor:all}}

We use the following theorem from \cite[Theorem 4]{CV}.

\begin{theorem}[Chodosh--Vikman]\label{thm:CV}
Let $(\Sigma,g)$ be a closed oriented Riemannian surface of genus $k\geq 2$, normalized by
\[
  \Area_g(\Sigma)=4\pi(k-1).
\]
Let $(\widetilde\Sigma,\widetilde g)$ be its universal Riemannian cover. Then, for every $r>0$,
\[
  \max_{\widetilde x\in\widetilde\Sigma}
  \Area_{\widetilde g}\bigl(B_{\widetilde g}(\widetilde x,r)\bigr)
  \geq 2\pi(\cosh r-1).
\]
\end{theorem}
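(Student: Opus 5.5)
Within this note, Theorem~\ref{thm:CV} is imported verbatim from \cite[Theorem 4]{CV}, so the "proof" here is a citation. To justify using it, I would first check two points. The normalization $\Area_g(\Sigma)=4\pi(k-1)$ is exactly the area of a hyperbolic metric of curvature $-1$ on a genus-$k$ surface. The right-hand side $2\pi(\cosh r-1)$ is exactly the area of a geodesic $r$-ball in $\mathbb H^2$. The statement therefore says that some ball in the universal cover is at least as large as the corresponding hyperbolic ball. Nonetheless, here is the route I would attempt if I had to prove it from scratch.

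\textbf{Step 1: reduce to the hyperbolic metric.} By uniformization, write $g=e^{2u}g_0$ with $g_0$ hyperbolic. Gauss--Bonnet gives $\Area_{g_0}(\Sigma)=4\pi(k-1)$, so the normalization becomes $\int_\Sigma e^{2u}\,dA_{g_0}=\int_\Sigma dA_{g_0}$, i.e.\ $e^{2u}$ has $g_0$-average $1$. When $u\equiv0$ the bound holds with equality for every $\widetilde x$. This suggests proving the bound by averaging: show that the \emph{mean} of $\widetilde x\mapsto \Area_{\widetilde g}(B_{\widetilde g}(\widetilde x,r))$, over a fundamental domain weighted by $dA_g$, is at least $2\pi(\cosh r-1)$.

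\textbf{Step 2: rewrite the average as a double integral.} Rewrite this average as
\[
\frac{1}{\Area_g(\Sigma)}\int_{F}\int_{\widetilde\Sigma}\mathbf 1_{\{d_{\widetilde g}(x,y)<r\}}\,dA_{\widetilde g}(y)\,dA_{\widetilde g}(x),
\]
where $F$ is a fundamental domain. Equivalently, it is a sum over deck transformations $\gamma$ of the $g\times g$ measure of pairs joined by a path of length $<r$ in the class of $\gamma$.

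\textbf{Step 3: compare with $\mathbb H^2$ (main obstacle).} The plan is to compare this count with the hyperbolic one using the conformal factor. I would try a Jensen/Cauchy--Schwarz inequality along $g_0$-geodesics, together with an integral-geometric averaging over the unit tangent bundle (Santal\'o's formula for the geodesic flow of $g_0$). The aim is to show that, on average over pairs, $g$-lengths of curves are not larger than $g_0$-lengths in a way that would shrink the balls. This is the step I expect to be hard: distances do not transform pointwise under conformal change, so the argument must be a measure-theoretic one, in the style of Croke's or Gromov's filling arguments.

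\textbf{Step 4: fallback.} If Step 3 fails, I would instead argue by contradiction. Assume every ball satisfies $\Area_{\widetilde g}(B(\widetilde x,r))<2\pi(\cosh r-1)$. Then use the coarea formula, $\frac{d}{dr}\Area(B)=\mathrm{length}(\partial B)$, together with a hyperbolic-type isoperimetric comparison derived from the topology, namely the exponential growth of $\pi_1$ with Gauss--Bonnet total curvature $-4\pi(k-1)$. The goal would be to force $\Area_g(\Sigma)<4\pi(k-1)$, contradicting the normalization.

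In any case, for this paper I would simply invoke \cite{CV}.
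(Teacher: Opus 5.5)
Your proposal matches the paper, which gives no proof of this statement and simply imports it as \cite[Theorem 4]{CV}; your consistency checks are correct: by Gauss--Bonnet the normalization is the area of a curvature $-1$ metric, $2\pi(\cosh r-1)$ is the area of an $r$-ball in $\mathbb{H}^2$, and equality holds for every $\widetilde x$ when $g$ is hyperbolic. Be clear, though, that your from-scratch outline is not a proof: Step~3 is exactly the substantive content of the Chodosh--Vikman theorem (a Katok-type Jensen argument controls $g$-lengths of $g_0$-geodesics averaged against the $g_0$-Liouville measure, not the $dA_g\times dA_g$-measure of pairs at $\widetilde g$-distance $<r$), and Step~4 would need a sharp hyperbolic isoperimetric inequality, which fails for general metrics and cannot be extracted from the growth of $\pi_1$, so the citation is what carries the argument, exactly as in the paper.
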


\begin{lemma}\label{lem:embed}
Let $(\Sigma,g)$ be a closed Riemannian manifold, let
\[
  p:(\widetilde\Sigma,\widetilde g)\longrightarrow(\Sigma,g)
\]
be its universal Riemannian covering, and set $s=\sys_g(\Sigma)$. If $0<r<s/2$, then the restriction of $p$ to every ball
\[
  B_{\widetilde g}(\widetilde x,r)
\]
is injective. Consequently,
\[
  \Area_{\widetilde g}\bigl(B_{\widetilde g}(\widetilde x,r)\bigr)
  \leq \Area_g(\Sigma).
\]
\end{lemma}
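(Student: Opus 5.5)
We argue by contradiction: if two distinct points of the ball have the same image, we will build a non-contractible loop in $\Sigma$ of length less than $s$. Fix $\widetilde x\in\widetilde\Sigma$ and $0<r<s/2$, and suppose $\widetilde y_1\neq\widetilde y_2$ lie in $B_{\widetilde g}(\widetilde x,r)$ with $p(\widetilde y_1)=p(\widetilde y_2)$. Since $\widetilde\Sigma$ is complete (it covers a closed manifold), we choose minimizing geodesics $\widetilde\gamma_i$ from $\widetilde x$ to $\widetilde y_i$, each of length $<r$.

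The loop is obtained by projecting $\widetilde\gamma_1$ followed by $\widetilde\gamma_2$ reversed. This gives a closed curve $c=p\circ\widetilde\gamma_1 * \overline{p\circ\widetilde\gamma_2}$ based at $p(\widetilde y_1)=p(\widetilde y_2)$, and
\[
L(c)<2r<s .
\]
Lifting $c$ starting at $\widetilde y_1$ runs back along $\widetilde\gamma_1$ to $\widetilde x$ and then along $\widetilde\gamma_2$ to $\widetilde y_2\neq\widetilde y_1$. So the lift is not closed. Because $\widetilde\Sigma$ is simply connected, a loop is contractible exactly when its lifts are closed, so $c$ is non-contractible.

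It remains to compare with the systole, and this is the only point needing care. The paper defines $\sys$ via shortest non-contractible closed \emph{geodesics}, whereas $c$ is merely a piecewise geodesic loop. We use the standard fact that on a closed manifold every nontrivial free homotopy class contains a closed geodesic minimizing length in that class. This follows from Arzel\`a--Ascoli together with the positive injectivity radius, or equivalently from the deck translation of $\widetilde y_1$ to $\widetilde y_2$ realizing its minimal displacement. Hence the free homotopy class of $c$ contains a closed geodesic of length $\le L(c)<s$, contradicting the definition of $s$. Therefore $p$ is injective on $B_{\widetilde g}(\widetilde x,r)$.

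For the area bound, $p$ is a local isometry, so its restriction to the open ball is an isometric injection onto an open subset of $\Sigma$. Consequently
\[
\Area_{\widetilde g}\bigl(B_{\widetilde g}(\widetilde x,r)\bigr)=\Area_g\bigl(p(B_{\widetilde g}(\widetilde x,r))\bigr)\le\Area_g(\Sigma).
\]
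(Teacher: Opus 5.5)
Your proof is correct and follows the paper's argument: you join the two preimages through $\widetilde x$, project to a non-contractible loop of length $<2r<s$, and use that $p$ is a local isometry for the area bound. You add two things the paper does not have: an explicit appeal to a length-minimizing closed geodesic in each nontrivial free homotopy class, which the paper leaves implicit when it invokes the geodesic definition of $\sys$, and a harmless notational slip, since $p\circ\widetilde\gamma_1 * \overline{p\circ\widetilde\gamma_2}$ is based at $p(\widetilde x)$ while the lift you describe is that of $\overline{p\circ\widetilde\gamma_1} * p\circ\widetilde\gamma_2$.
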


\begin{proof}
Suppose that distinct points $y,z\in B_{\widetilde g}(\widetilde x,r)$ have the same image under $p$. Then $z=\gamma y$ for a nontrivial deck transformation $\gamma$. Joining $y$ to $z$ by first going from $y$ to $\widetilde x$ and then from $\widetilde x$ to $z$ gives a path of length strictly less than $2r<s$. Its projection is a closed curve representing the nontrivial element $\gamma\in\pi_1(\Sigma)$, contradicting the definition of the systole. Thus $p$ is injective on the ball. Since $p$ is a local isometry, it preserves area on this ball, and its image is contained in $\Sigma$.
\end{proof}

\begin{theorem}\label{thm:strong}
Let $(\Sigma,g)$ be a closed oriented Riemannian surface of genus $k\geq2$. Then
\begin{equation}\label{eq:strong-bound}
  \frac{\sys_g(\Sigma)^2}{\Area_g(\Sigma)}
  \leq
  \frac{\arcosh^2(2k-1)}{\pi(k-1)}
  \leq
  \frac{\arcosh^2 3}{\pi}
  <\frac{2}{\sqrt3}.
\end{equation}
In particular, every such surface is Loewner.
\end{theorem}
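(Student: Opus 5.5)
By scale invariance of the systolic ratio, we may normalize $g$ so that $\Area_g(\Sigma)=4\pi(k-1)$, which is exactly the normalization in Theorem~\ref{thm:CV}. Set $s=\sys_g(\Sigma)$. For every $r$ with $0<r<s/2$, Theorem~\ref{thm:CV} gives a point $\widetilde x$ with $\Area_{\widetilde g}(B_{\widetilde g}(\widetilde x,r))\geq 2\pi(\cosh r-1)$. Lemma~\ref{lem:embed} bounds the same area above by $\Area_g(\Sigma)=4\pi(k-1)$. Combining the two yields $\cosh r-1\leq 2(k-1)$, that is, $\cosh r\leq 2k-1$. Letting $r\uparrow s/2$ and using continuity of $\cosh$ gives $\cosh(s/2)\leq 2k-1$, hence $s\leq 2\arcosh(2k-1)$. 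Therefore
\[
\frac{s^2}{\Area_g(\Sigma)}\leq \frac{4\arcosh^2(2k-1)}{4\pi(k-1)}=\frac{\arcosh^2(2k-1)}{\pi(k-1)},
\]
which is the first inequality in \eqref{eq:strong-bound}.

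For the second inequality, we show that $f(k)=\arcosh^2(2k-1)/(k-1)$ is nonincreasing for real $k\geq 2$, so that $f(k)\leq f(2)=\arcosh^2 3$. Substitute $t=2k-1\geq 3$ and $u=\arcosh t$, so that $k-1=(t-1)/2$ and $f=2u^2/(t-1)$. Its derivative in $t$ has the sign of $2u\,u'(t-1)-u^2$, where $u'=1/\sqrt{t^2-1}$. It therefore suffices to show
\[
2\sqrt{\tfrac{t-1}{t+1}}\leq \arcosh t \qquad (t\geq 3).
\]
At $t=3$ the left side is $\sqrt2\approx1.414$ and the right side is $\log(3+2\sqrt2)\approx1.763$. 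For all $t\geq 3$ the left side stays below $2$, while the right side is increasing. This reduces the claim to checking $\arcosh t\geq 2$ for $t\geq\cosh 2\approx3.76$, and to a direct comparison on the short interval $[3,3.76]$, where the left side is at most $2\sqrt{2.76/4.76}\approx1.52<1.763$. This monotonicity argument is the only real computation in the proof. I expect it to be the main, though elementary, obstacle, and I would organize it exactly as above.

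Finally, $\arcosh 3=\log(3+2\sqrt2)\approx1.7627$, so $\arcosh^2 3/\pi\approx 3.107/3.1416\approx0.989$. On the other hand, $2/\sqrt3\approx1.1547$, which gives the strict final inequality. To make this rigorous, it is enough to use the crude bounds $\log(3+2\sqrt2)<1.77$ and $\pi>3.14$, which yield $\arcosh^2 3/\pi<3.1329/3.14<1<2/\sqrt3$. The Loewner property for genus $k\geq2$ follows immediately.
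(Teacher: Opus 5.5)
Your proof is correct and follows the paper's argument: the same normalization $\Area=4\pi(k-1)$, the same combination of Theorem~\ref{thm:CV} with Lemma~\ref{lem:embed}, the same limit $r\nearrow s/2$, and a monotonicity argument for the comparison with the Loewner constant. For the monotonicity step, under $t=\cosh u$ your inequality $2\sqrt{(t-1)/(t+1)}\le\arcosh t$ is exactly the paper's $2\tanh(u/2)\le u$, which follows at once from $\tanh v\le v$ for all $u\ge 0$ and spares you the numerical case split on $[3,\cosh 2]$.
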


\begin{proof}
Write
\[
  A=\Area_g(\Sigma),
  \qquad
  s=\sys_g(\Sigma),
\]
and rescale the metric by
\[
  \widehat g=\lambda^2g,
  \qquad
  \lambda^2=\frac{4\pi(k-1)}{A}.
\]
Then
\[
  \Area_{\widehat g}(\Sigma)=4\pi(k-1),
  \qquad
  \widehat s:=\sys_{\widehat g}(\Sigma)=\lambda s,
\]
and the systolic ratio is scale invariant:
\[
  \frac{s^2}{A}
  =
  \frac{\widehat s^2}{4\pi(k-1)}.
\]

Fix $0<r<\widehat s/2$. By Theorem~\ref{thm:CV}, there is a point $\widetilde x$ in the universal cover such that
\[
  \Area_{\widetilde{\widehat g}}
  \bigl(B_{\widetilde{\widehat g}}(\widetilde x,r)\bigr)
  \geq 2\pi(\cosh r-1).
\]
By Lemma~\ref{lem:embed}, this ball embeds isometrically into $(\Sigma,\widehat g)$. Hence
\[
  4\pi(k-1)
  =\Area_{\widehat g}(\Sigma)
  \geq 2\pi(\cosh r-1).
\]
Letting $r\nearrow\widehat s/2$ gives
\[
  \cosh\!\left(\frac{\widehat s}{2}\right)\leq 2k-1,
\]
and therefore
\[
  \widehat s\leq 2\arcosh(2k-1).
\]
It follows that
\begin{equation}\label{eq:first-bound}
  \frac{s^2}{A}
  =\frac{\widehat s^2}{4\pi(k-1)}
  \leq
  \frac{\arcosh^2(2k-1)}{\pi(k-1)}.
\end{equation}

It remains to compare the right-hand side with the Loewner constant. For real $x>1$, put
\[
  F(x)=\frac{\arcosh^2(2x-1)}{\pi(x-1)}.
\]
Writing $t=\arcosh(2x-1)$, so that $x-1=(\cosh t-1)/2$, we have
\[
  F(x)=\frac{2t^2}{\pi(\cosh t-1)}.
\]
Moreover,
\[
  \frac{d}{dt}\left(\frac{2t^2}{\pi(\cosh t-1)}\right)
  =
  \frac{2t\bigl(2(\cosh t-1)-t\sinh t\bigr)}{\pi(\cosh t-1)^2}
  \leq0,
\]
because
\[
  2(\cosh t-1)\leq t\sinh t
  \quad\Longleftrightarrow\quad
  2\tanh\!\left(\frac t2\right)\leq t,
\]
and $\tanh u\leq u$ for $u\geq0$. Thus $F$ is decreasing, and for every integer $k\geq2$,
\[
  \frac{\arcosh^2(2k-1)}{\pi(k-1)}\leq \frac{\arcosh^2 3}{\pi}<\frac87<\frac{2}{\sqrt{3}}.
\]

Combining this with \eqref{eq:first-bound} proves \eqref{eq:strong-bound}.
\end{proof}

\begin{proof}(of Theorem \ref{cor:all})
For genus one this is Loewner's classical torus theorem. For genus at least two it follows from Theorem~\ref{thm:strong}.
\end{proof}

\bibliographystyle{siam}

\end{document}